\newtheorem{thm}{Theorem}[section]
\newtheorem{lem}[thm]{Lemma}
\theoremstyle{definition}
\newtheorem{definition}[thm]{Definition}
\theoremstyle{remark}
\numberwithin{equation}{section}
\newcommand{\N}{\mathbb{N}}
\begin{document}
\title{Star-coloring Splitting Graphs of Cycles}

\author{Sumun Iyer$^1$}
\address{$^1$Williams College}
\email{ssi1@williams.edu}

\begin{abstract}
A star coloring of a graph $G$ is a proper vertex coloring such that the subgraph induced by any pair of color classes is a star forest. The star chromatic number of $G$ is the minimum number of colors needed to star color $G$. In this paper we determine the star-chromatic number of the splitting graphs of cycles of length $n$ with $n \equiv 1 \pmod 3$ and $n=5$, resolving an open question of Furnma{\'n}czyk, Kowsalya, and Vernold Vivin.
\end{abstract}

\maketitle

\noindent 2010 {\it Mathematics Subject Classification}: 05C15; 05C75.   

\noindent \emph{Keywords: Star coloring; splitting graph; cycle.}

\section{Introduction}
Let $G=(V,E)$ be a simple, undirected graph. A proper vertex $n$-coloring of $G$ is a surjective mapping $\phi: V \to \{1,2, \dots, n\}$ such that if $u$ is adjacent to $v$, then $\phi(u) \neq \phi(v)$. An \emph{n-star-coloring} of $G$ is a proper vertex $n$-coloring with one additional condition: each path on four vertices in $G$ is colored by at least three distinct colors. Alternatively, a star coloring of $G$ is a coloring such that the subgraph induced by any pair of color classes of $G$ is a star forest. Star coloring strengthens the notion of \emph{acyclic coloring} in which the subgraph induced by any pair of color classes is a forest. The \emph{star chromatic number} of $G$, denoted $\chi_s(G)$, is the minimum number of colors needed to star color $G$. 

Star coloring was first introduced by Branko Gr{\"u}nbaum in 1973 in the context of strengthening acyclic colorings of planar graphs \cite{grunbaum}. Star coloring also arises naturally in combinatorial computing. As one would imagine, finding an optimal star coloring of a general graph is NP-hard. Coleman and Mor{\'e} showed that star-coloring remains an NP-hard problem even on bipartite graphs \cite{coleman}. Coloring variants (like acyclic or star coloring) have been used to compute sparse Hessian and Jacobian matrices with techniques like finite differences and automatic differentiation. Gebremedhin, Tarafdar, Manne, and Pothen provided algorithms for finding heuristic solutions to star coloring and acyclic coloring problems \cite{gebre}. Their techniques utilize the structure of subgraphs induced by color classes and their findings have applications to efficient computation of Hessian matrices. Because the problems of computing these matrices can be recast as graph coloring problems, employing graph coloring as a model for computation can yield particularly effective algorithms. See \cite{colorsurvey} for a detailed survey of using graph coloring to compute derivatives.

In 2004 Fertin, Raspaud, and Reed determined the star chromatic number of trees, cycles, complete bipartite graphs, and other families of graphs \cite{fertin}. Star chromatic numbers of other types of graphs--including sparse graphs, bipartite planar graphs, and planar graphs with high girth--are studied in \cite{bu}, \cite{kierstead}, \cite{mohan}, and \cite{timmons}.

For a vertex $v$ of a graph $G = (V,E)$ let $N(v)$ denote the open neighborhood of $v$. The \emph{splitting graph}, $S(G)$, is obtained by adding a new vertex $v'$ corresponding to each $v$ in $V$ and edges from $v'$ such that $N(v)=N(v')$ (see Figure \ref{Fig1}). The splitting graph construction plays an important role in the theory of graph labeling. For a comprehensive survey of results in graph labeling and more references on splitting graphs see Gallian's ``Dynamic survey of graph labeling" \cite{gallian}. In 2017 Furma{\'n}czyk, Kowsalya, and Vernold Vivin determined the star chromatic number of splitting graphs of complete and complete bipartite graphs, paths, and some cycles \cite{furman}. They posed as an open question the problem of determining the  star chromatic number of splitting graphs of cycles on $n$ vertices where $n =5$ or $n \equiv 1 \pmod 3$. In this paper we provide a construction that shows that $\chi_S(S(C_n)) = 4$ for all $n \equiv 1 \pmod 3$, $n \geq 10$ and prove that $\chi_S(S(C_4))=\chi_S(S(C_5))=\chi_S(S(C_7)) = 5$.

\section{Splitting graphs of cycles}

We include the following two results for completeness. The first is due to Fertin, Raspaud, and Reed \cite{fertin} and the second is due to Furma{\'n}czyk, Kowsalya, and Vernold Vivin \cite{furman}.

\begin{thm}\label{cycle}\emph{(Fertin, Raspaud, Reed)}
Let $C_n$ be a cycle on $n \geq 3$ vertices. Then,
\[
\chi_S(C_n)=
\begin{cases}
4 & \textrm{when} \ n=5 \\
3 & \textrm{otherwise.}
\end{cases}
\]
\end{thm}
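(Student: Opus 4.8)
The plan is to establish the two inequalities separately. First I would record the reformulation: if a path on four vertices in a properly colored graph uses only two colors, then it is colored $a,b,a,b$; so a proper $3$-coloring of $C_n$ is a star coloring if and only if no four consecutive vertices of $C_n$ receive only two colors. Combined with the fact that $\chi_S(C_n)\geq 3$ for every $n\geq 3$ — for $n\geq 4$ because $C_n$ contains a $P_4$ and any star coloring of $P_4$ uses at least three colors (a proper $2$-coloring of $P_4$ is the forbidden pattern $a,b,a,b$), and for $n=3$ because $C_3=K_3$ — this reduces the ``otherwise'' case to exhibiting, for each $n\neq 5$, a proper $3$-coloring of $C_n$ in which every window of four consecutive vertices uses all three colors.

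For $n=5$ the lower bound needs a separate argument. Since the independence number of $C_5$ is $2$, every color class of a proper coloring of $C_5$ has at most two vertices, so a proper $3$-coloring of $C_5$ must have class sizes $2,2,1$; deleting the unique vertex in the singleton class leaves four consecutive vertices using only the other two colors, a bicolored $P_4$. Hence $\chi_S(C_5)\geq 4$, and the explicit coloring $1,2,3,1,4$ read around the cycle shows $\chi_S(C_5)=4$.

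For the upper bound when $n\neq 5$ I would index the vertices cyclically as $v_0,\dots,v_{n-1}$ and give one periodic pattern per residue class modulo $3$: if $3\mid n$, color $v_i$ with $(i\bmod 3)+1$; if $n\equiv 1\pmod 3$ (so $n\geq 4$), use the string $1,2,3,2$ followed by $(n-4)/3$ copies of $1,2,3$; and if $n\equiv 2\pmod 3$ (so $n\geq 8$), use $(n-5)/3$ copies of $1,2,3$ followed by $1,2,1,3,2$. In each case one checks that consecutive vertices receive distinct colors and that every length-four window — in particular the few windows straddling the seam between the irregular block and the periodic part — contains all three colors.

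The seam checks in the two non-divisible cases are the only mildly tedious step; the genuinely non-obvious point is the $n=5$ lower bound, and the independence-number argument above keeps it short. I would present the material as a brief lemma listing the three periodic patterns, preceded by the $n=5$ analysis.
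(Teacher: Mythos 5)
The paper does not actually prove this theorem --- it is quoted from Fertin, Raspaud, and Reed \cite{fertin} ``for completeness'' --- so there is no internal proof to compare against; what matters is whether your argument stands on its own, and it does. The reduction to ``every window of four consecutive vertices sees three colors'' is the right reformulation (every $P_4$ in $C_n$ consists of four consecutive vertices, and a properly $2$-colored $P_4$ is forced to be $a,b,a,b$), the independence-number argument for $\chi_S(C_5)\geq 4$ is correct and pleasantly short ($C_5$ minus the singleton class is a $P_4$ properly colored with exactly two colors, hence bicolored), and the three periodic patterns check out, including the seams and wrap-arounds in the smallest cases $n=4$, $n=7$, and $n=8$ (e.g.\ $1,2,3,2$ works for $C_4$ and $1,2,3,1,2,1,3,2$ works for $C_8$). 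This is essentially the same strategy as the original Fertin--Raspaud--Reed proof --- exhibit a periodic $3$-coloring with a short irregular block to absorb the residue, and treat $C_5$ separately --- so nothing further is needed beyond writing out the routine window verifications you already flag as the tedious step.
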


\begin{thm} \label{furmanthm} \emph{(Furma{\'n}czyk, Kowsalya, Vernold Vivin)}
Let $C_n$ be a cycle on $n \geq 3$ vertices. Then
\[
\chi_S(S(C_n))
\begin{cases}
=4 & \textrm{if} \ n \not\equiv 1 \mod 3 \ \textrm{and} \ n \neq 5 \\
\leq 5 & \textrm{otherwise.}
\end{cases}
\]
\end{thm}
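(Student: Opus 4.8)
The plan is to establish the two inequalities separately: first $\chi_S(S(C_n)) \ge 4$ for every $n \ge 3$; then $\chi_S(S(C_n)) \le 4$ when $n \not\equiv 1 \pmod 3$ and $n \ne 5$; and finally $\chi_S(S(C_n)) \le 5$ for the remaining values of $n$; together these give the theorem. I fix once and for all the following description of $S(C_n)$: write $v_1,\dots,v_n$ for the cycle (indices modulo $n$) and $v_1',\dots,v_n'$ for the added vertices, so that $v_i'$ has degree two with $N(v_i')=\{v_{i-1},v_{i+1}\}$ and no two primed vertices are adjacent. I will repeatedly use the elementary fact that, in a proper colouring $\phi$, a path $x_1x_2x_3x_4$ receives only two colours if and only if $\phi(x_1)=\phi(x_3)$ and $\phi(x_2)=\phi(x_4)$; hence $\phi$ is a star colouring precisely when no $P_4$ satisfies both of these coincidences. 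In particular, if $\phi$ restricted to $C_n$ has the property that every three consecutive vertices get three distinct colours (equivalently, vertices at cyclic distance two always differ), then every $P_4$ of $S(C_n)$ such that one of the pairs $\{x_1,x_3\}$, $\{x_2,x_4\}$ consists of two cycle vertices at cyclic distance two is automatically safe; a short inspection of the (five, up to reversal) membership patterns a $P_4$ can have in $S(C_n)$ shows the only paths not of that form are those of the shape primed--unprimed--unprimed--primed, for which \emph{both} coincidence slots pair a primed vertex with an unprimed one.

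For the lower bound I would isolate the obstruction as a single six-vertex graph $H$: the path $a\,b\,c\,d$ together with a vertex $b'$ adjacent to $\{a,c\}$ and a vertex $c'$ adjacent to $\{b,d\}$. This $H$ is an induced subgraph of $S(C_n)$ for every $n \ge 5$ (take four consecutive cycle vertices for $a,b,c,d$ and set $b'=v_i'$, $c'=v_{i+1}'$; absence of the chord $ad$ uses $n\ge 5$). I would then prove $\chi_S(H) \ge 4$ by a short case analysis: in a proper $3$-colouring with $\phi(b)=\beta$, $\phi(c)=\gamma$ and third colour $\tau$, adjacency forces $\phi(b')\in\{\beta,\tau\}$ and $\phi(c')\in\{\gamma,\tau\}$; the path $b'\,c\,b\,c'$ excludes $(\phi(b'),\phi(c'))=(\beta,\gamma)$; and in each of the three remaining options adjacency pins down $\phi(a)$ and $\phi(d)$, whereupon one of the paths $a\,b\,c\,d$, $a\,b'\,c\,d$, $d\,c'\,b\,a$ turns out to be two-coloured. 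Combined with a direct check for $n=3$, this yields $\chi_S(S(C_n)) \ge 4$ for all $n\ge 3$, which in particular settles the lower half of every instance of the ``$=4$'' case.

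For the upper bounds I would exhibit explicit colourings. When $3 \mid n$: colour $C_n$ periodically by $1,2,3,1,2,3,\dots$ (no equal colours at cyclic distance two) and give every $v_i'$ colour $4$. By the remarks above, every $P_4$ whose relevant pair is a distance-two cycle pair is safe, and every remaining $P_4$ — necessarily of the shape primed--unprimed--unprimed--primed — has both coincidence slots pairing colour $4$ with a colour in $\{1,2,3\}$, hence is safe too; so $\chi_S(S(C_n))=4$. When $n \equiv 2 \pmod 3$ and $n \ne 5$: a proper $3$-colouring of $C_n$ must contain vertices of equal colour at cyclic distance two (``defects''). Viewing a proper $3$-colouring of $C_n$ as a closed walk in $K_3$ and recording the orientation of each step, the number of defects equals the number of cyclic sign changes of the resulting $\pm$-sequence, which is exactly two when the sequence is a single block of each sign; for $n \ge 8$ this can be arranged with both blocks of length at least two, giving a $3$-star-colouring of $C_n$ with exactly two defects lying far apart. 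Now colour every shadow $4$, except that the \emph{middle shadow} of each defect — the vertex $v_{i+1}'$ between the equally coloured $v_i,v_{i+2}$ — is recoloured with the unique colour of $\{1,2,3,4\}$ distinct from $4$, from the defect colour, and from $\phi(v_{i+1})$. Since the two defects are separated and every edge of $S(C_n)$ joins vertices at index-distance one, no $P_4$ meets both middle shadows, so only a bounded neighbourhood of each defect must be re-examined; a finite check of the $P_4$'s through a recoloured shadow shows none is two-coloured, so $\chi_S(S(C_n))\le 4$, and hence equals $4$. Finally, when $n \equiv 1 \pmod 3$ or $n=5$ a fifth colour is available and far less care is needed: for $n=5$ one exhibits a $5$-star-colouring of the ten-vertex graph $S(C_5)$ directly; for $n \equiv 1 \pmod 3$ one takes a $3$-star-colouring of $C_n$ with two controlled defects, colours the shadows $4$ except for recolouring the two middle shadows with the fresh colour $5$, and checks; this gives $\chi_S(S(C_n)) \le 5$.

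The step I expect to be the genuine obstacle is the upper bound $\chi_S(S(C_n)) \le 4$ for $n \equiv 2 \pmod 3$. The naive idea — force each shadow to avoid the colours of all five cycle vertices within index-distance two of it — would cost a fifth colour, so one must instead rely on the weaker fact that a dangerous $P_4$ stops being two-coloured as soon as just one of its two colour-coincidences is broken. Making this precise is what dictates both where the two defects of the cycle colouring should sit (so that the local pictures around them do not overlap) and exactly how each middle shadow must be recoloured, and verifying that these recolourings create no new two-coloured $P_4$ is where essentially all of the casework is concentrated. A secondary point, easy but not to be skipped, is confirming that the short list of $P_4$-shapes in $S(C_n)$ used throughout the verifications is genuinely exhaustive.
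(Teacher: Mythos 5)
First, note that the paper does not prove this statement at all: Theorem \ref{furmanthm} is quoted from Furma{\'n}czyk, Kowsalya, and Vernold Vivin \cite{furman} ``for completeness,'' so there is no in-paper argument to compare yours against. Judged on its own terms, your lower bound is correct and cleanly packaged: the six-vertex gadget $H$ (the path $a\,b\,c\,d$ plus $b'$, $c'$) does satisfy $\chi_S(H)\ge 4$ by exactly the case analysis you sketch, and since a star colouring restricted to any subgraph is again a star colouring, induced-ness (hence the $n\ge 5$ caveat) is not even needed. Your $3\mid n$ upper bound and your enumeration of the five $P_4$-shapes in $S(C_n)$ up to reversal are also correct.

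The genuine gap is the upper bound for $n\equiv 2\pmod 3$, and it is not merely unfinished casework: the construction as specified provably fails. Around an isolated defect the cycle colouring is forced to read $a,b,c,b,a$ on $v_{i-1},v_i,v_{i+1},v_{i+2},v_{i+3}$ (positions $i-1$ and $i+1$ are not defects, so $\phi(v_{i-1})=\phi(v_{i+3})=a$). Every $P_4$ of the form $x\to v_i\to v_{i+1}'\to v_{i+2}$ has one coincidence slot equal to the monochromatic defect pair $\{v_i,v_{i+2}\}$, so it is two-coloured unless $\phi(v_{i+1}')\ne\phi(x)$; letting $x$ range over $v_{i-1}$, $v_{i+1}$, $v_{i-1}'$, and adding properness, this forbids $a$, $c$, $4$, and $b$ --- all four colours. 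Your rule assigns $v_{i+1}'$ precisely the colour $a$, whereupon $v_{i-1}\to v_i\to v_{i+1}'\to v_{i+2}$ is coloured $a,b,a,b$. Recolouring only the middle shadow therefore cannot work for any choice of colour; one must also pull some of the shadows $v_{i-1}'$, $v_{i+3}'$ off colour $4$, which is exactly the extra bookkeeping visible in the paper's own construction in Theorem \ref{construction}. A secondary, fixable issue: your $\le 5$ argument needs the two middle shadows at mutual index distance at least $4$ (else a path of shape primed--unprimed--primed--unprimed through both of them and a defect pair is two-coloured), and the admissible block lengths (both $\equiv 2\pmod 3$) make this impossible on $C_4$ and $C_7$; those cases need explicit ad hoc $5$-colourings alongside $n=5$.
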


To resolve the case of splitting graphs of cycles $C_n$ with $n \equiv 1 \pmod 3$, we first present a construction that shows that $\chi_S(S(C_n))$ is 4 for $n \geq 10$.

\begin{thm}\label{construction}
If $n \equiv 1 \mod 3$ and $n \geq 10$, then $\chi_S(S(C_n)) = 4$.
\end{thm}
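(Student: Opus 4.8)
The plan is to prove $\chi_S(S(C_n))\ge 4$ and $\chi_S(S(C_n))\le 4$ separately, the second being the substance.

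For the lower bound, I would show $S(C_n)$ has no star $3$-coloring. Suppose $\phi$ were one, fix an index $i$, and suppose $\phi(v_{i-1})\ne\phi(v_{i+1})$. Each of $v_i$ and $v_i'$ is adjacent to both $v_{i-1}$ and $v_{i+1}$, hence is forced to the third color; in particular $\phi(v_i)=\phi(v_i')$. Now $v_{i-1}'\,v_i\,v_{i+1}\,v_i'$ and $v_{i-1}'\,v_i\,v_{i-1}\,v_i'$ are paths on four vertices (the required edges are the cycle edges $v_iv_{i\pm1}$, the edge $v_{i-1}'v_i$, and the edges $v_i'v_{i-1},v_i'v_{i+1}$), with color sequences $(\phi(v_{i-1}'),\phi(v_i),\phi(v_{i+1}),\phi(v_i))$ and $(\phi(v_{i-1}'),\phi(v_i),\phi(v_{i-1}),\phi(v_i))$. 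For each to use three colors, $\phi(v_{i-1}')$ must avoid $\{\phi(v_i),\phi(v_{i+1})\}$ and $\{\phi(v_i),\phi(v_{i-1})\}$, i.e.\ all three colors (which are pairwise distinct) — a contradiction. So $\phi(v_{i-1})=\phi(v_{i+1})$ for every $i$; this makes $\phi$ constant on $C_n$ if $n$ is odd and a proper $2$-coloring of $C_n$ if $n$ is even, and the latter is not a star coloring since $v_{i-1}v_iv_{i+1}v_{i+2}$ is $2$-colored. In both cases we contradict star-ness, so $\chi_S(S(C_n))\ge 4$, and by Theorem~\ref{furmanthm} it remains to construct a star $4$-coloring.

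For the upper bound, the coloring to deform is the one valid for $n\equiv 0\pmod 3$: color $C_n$ periodically $1,2,3,1,2,3,\dots$ and give every split vertex $v_i'$ color $4$. When well defined this is a star coloring — a path on four vertices inside $C_n$ uses three colors by periodicity, and a path on four vertices through a split vertex either carries color $4$ only once (so cannot be $2$-colored) or has the shape $v_{j-1}v_j'v_{j+1}v_{j+2}$ or $v_j'v_{j+1}v_{j+2}'v_{j+3}$, which is saved because cycle vertices two apart receive different colors under a period-$3$ pattern. For $n\equiv 1\pmod 3$ this pattern does not close around $C_n$, so I would keep it outside one bounded arc and insert there a ``defect'' of length $\equiv 1\pmod 3$ (a block of four consecutive cycle vertices), recoloring $C_n$ on that block so the full cyclic sequence remains a proper star coloring of $C_n$ — for instance letting $C_n$ read $\dots,1,2,3,4,1,2,3,\dots$ using color $4$ just once, or placing two well-separated phase-shifts of the $3$-coloring — and correspondingly recoloring the few split vertices whose cycle-neighbors now see a new color.

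The verification away from the defect is exactly the argument above; near the defect one checks the finitely many paths on four vertices meeting a modified vertex, and this is where I expect the main obstacle. The point is that the recolored split vertices no longer carry the ``safe'' color $4$ but inner colors, so they can form a $2$-colored $P_4$ with the cycle in new ways — typically a split vertex $x'$ whose color equals that of the vertex two cycle-steps away (the one reached from $x'$ along two cycle edges through a common neighbor), or one whose color matches its second cycle-neighbor. Several of the most natural defects fail precisely here: every available color for some modified split vertex gets excluded. So the real work is to choose the defect together with the nearby split-vertex colors so that every such local path picks up a third color; once that local pattern is fixed, the check is finite, elementary, and independent of $n$, hence handles all sufficiently large $n\equiv 1\pmod 3$ uniformly, with perhaps a base case such as $n=10$ treated by an explicit coloring.
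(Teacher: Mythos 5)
Your lower bound is fine and in fact more self-contained than the paper's (which simply cites Furma{\'n}czyk, Kowsalya, and Vernold Vivin for $\chi_S(S(C_n))\ge 4$): the forcing argument via the two paths $v_{i-1}'\,v_i\,v_{i+1}\,v_i'$ and $v_{i-1}'\,v_i\,v_{i-1}\,v_i'$ is correct, and the parity endgame works. Your upper-bound \emph{strategy} also matches the paper's: keep the periodic $1,2,3$ pattern on the cycle with all split vertices colored $4$ away from a bounded arc, insert a local ``defect'' block there, and reduce the verification to a finite check that is independent of $n$.

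The genuine gap is that the defect is never exhibited. The entire content of the upper bound is the choice of the local pattern — which cycle vertices get recolored, and crucially which colors the nearby split vertices receive once color $4$ is no longer available to them — and you explicitly concede that ``several of the most natural defects fail'' and that ``the real work is to choose the defect.'' A plan plus an acknowledgment that the key object has not been found is not a proof; without a concrete block one cannot even begin the finite check you correctly identify as the remaining step. For comparison, the paper colors the last seven cycle vertices $v_{n-7},\dots,v_{n-1}$ with $1,2,3,4,1,3,4$, sets $\phi(v_j')=2$ for $n-4\le j\le n-1$, $\phi(v_0')=3$, $\phi(v_{n-5}')=1$, and $\phi(v_i')=4$ otherwise, then verifies the finitely many $P_4$'s meeting the modified arc (via the $n=10$ base case and a $3$-periodic insertion piece). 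To complete your argument you would need to supply such an explicit block and carry out that local verification; as written, the theorem's substance is missing.
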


\begin{proof}
Let $n \in \N$ with $n \equiv 1 \pmod 3$ and $n \geq 10$. By \cite{furman}, we know that for all $n$, $\chi_S(S(C_n)) \geq 4$. We now provide a construction to star color $S(C_n)$ with four colors. Label the vertices of the copy of $C_n$ in $S(C_n)$ with $v_0, v_1, \ldots , v_{n-1}$ clockwise. Label the vertex corresponding to $v_i$ in the splitting graph construction with $v_i'$ for $0 \leq i \leq n-1$.

Define $\phi : V(S(C_n)) \to \{1,2,3,4\}$ as follows. For $0 \leq i \leq n-8$:
\[
\phi(v_i) =
\begin{cases}
1 & \textrm{if} \ n \equiv 0 \pmod 3;\\
2 & \textrm{if} \ n \equiv 1 \pmod 3;\\
3 & \textrm{if} \ n \equiv 2 \pmod 3.
\end{cases}
\]

We color the remaining seven vertices of $C_n$ as follows. Let $\phi(v_{n-1})=\phi(v_{n-4})=4$, $\phi(v_{n-2})=\phi(v_{n-5})= 3$, $\phi(v_{n-3})=\phi(v_{n-7})=1$, and $\phi(v_{n-6})=2$.

Now, we color the splitting vertices. Let $\phi(v_i')=4$ for $1 \leq i \leq n-6$. Let $\phi(v_j')=2$ for $n-4 \leq j \leq n-1$. Let $\phi(v_0')=3$ and $\phi(v_{n-5}')=1$. 

We claim that $\phi$ is a star coloring of $S(C_n)$. The proof follows from Figure \ref{Fig1} and Figure \ref{Fig2}. Figure \ref{Fig1} shows the four star coloring of $S(C_{10})$. If $n=10+3k$, then our construction for $\phi$ essentially glues a copy of Figure \ref{Fig2} with $3k$ nodes at the appropriate spot (marked by dotted lines) in Figure \ref{Fig1}. It is easy to check that this does not create any new 2-colored $P_4$'s and so $\phi$ is a four star coloring of $S(C_n)$.
\end{proof}

\begin{figure}[t]
\begin{center}
\includegraphics[height=3cm]{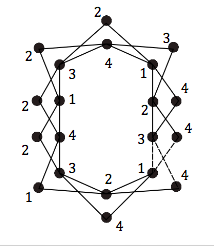}
\end{center}
\caption{The four star coloring of $S(C_{10})$ constructed in Theorem \ref{construction}.} \label{Fig1}
\end{figure}

\begin{figure}[t]
\begin{center}
\includegraphics[height=3cm]{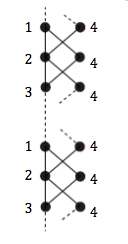}
\end{center}
\caption{The four star coloring of the "insertion" piece from Theorem \ref{construction}.} \label{Fig2}
\end{figure}

We will now argue that the star-chromatic numbers of $S(C_4)$, $S(C_5)$, and $S(C_7)$ are five. All three proofs have essentially the same flavor with some additional technical detail for $S(C_7)$. The idea of two colored graphs being ``the same" will be useful and so we provide a formal definition:

\begin{definition}
Let $G_1$ and $G_2$ be two graphs with vertex sets $V(G_1)$ and $V(G_2)$ respectively and vertex colorings $\phi_1$ and $\phi_2$ respectively. Then, $G_1$ and $G_2$ are \emph{isomorphic as vertex-colored graphs} if there are bijective functions $\pi: V(G_1) \to V(G_2)$ and $\theta: \{\phi_1(v) \ : \ v \in V(G_1)\} \to \{\phi_2(v) \ : \ v \in V(G_2)\}$ such that $u$ is adjacent to $v$ in $G_1$ if and only if $\pi(u)$ is adjacent to $\pi(v)$ in $G_2$ and for all $v \in V(G_1)$ we have $\theta(\phi_1(v))=\phi_2(\pi(v))$.
\end{definition}

\begin{thm}
The star chromatic number of the splitting graph of $C_4$ is 5.
\end{thm}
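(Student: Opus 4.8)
The plan is to establish the lower bound $\chi_S(S(C_4))\ge 5$, since the matching upper bound is already contained in Theorem~\ref{furmanthm}. Label the vertices of the cycle $v_0,v_1,v_2,v_3$ and their split copies $v_0',v_1',v_2',v_3'$, so that in $S(C_4)$ we have $N(v_i')=N(v_i)=\{v_{i-1},v_{i+1}\}$ with indices read mod $4$; consequently $N(v_0)=N(v_2)=\{v_1,v_3,v_1',v_3'\}$ and $N(v_1)=N(v_3)=\{v_0,v_2,v_0',v_2'\}$. Thus $v_0,v_2$ form a pair of (nonadjacent) false twins, as do $v_1,v_3$. Assume toward a contradiction that $\phi\colon V(S(C_4))\to\{1,2,3,4\}$ is a star coloring.

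The first step is to show that $\phi(v_0),\phi(v_1),\phi(v_2),\phi(v_3)$ are pairwise distinct. Suppose instead that $\phi(v_0)=\phi(v_2)=c$. For any two distinct $x,w\in N(v_0)=N(v_2)$, the sequence $x-v_0-w-v_2$ is a genuine $P_4$: its four vertices are distinct because $v_0\notin N(v_0)$ and $v_2\notin N(v_2)$, and its edges are present because $x,w\in N(v_0)$ and $w\in N(v_2)$. It is colored $\phi(x),c,\phi(w),c$, and since $x$ and $w$ are adjacent to $v_0$ their colors differ from $c$; so the star-coloring condition forces $\phi(x)\ne\phi(w)$. Hence $N(v_0)$ receives four distinct colors, none equal to $c$, which is five colors in all -- a contradiction. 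Therefore $\phi(v_0)\ne\phi(v_2)$, and symmetrically $\phi(v_1)\ne\phi(v_3)$. Combined with properness along the $4$-cycle, this makes all four colors appear on $\{v_0,v_1,v_2,v_3\}$, with $\{\phi(v_0),\phi(v_2)\}$ and $\{\phi(v_1),\phi(v_3)\}$ complementary pairs.

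Now I would finish by exhibiting a forbidden $P_4$. Since $v_1'$ is adjacent to $v_0$ and $v_2$, its color lies outside $\{\phi(v_0),\phi(v_2)\}$, hence $\phi(v_1')\in\{\phi(v_1),\phi(v_3)\}$; likewise $\phi(v_0')\in\{\phi(v_0),\phi(v_2)\}$. Pick $j\in\{1,3\}$ with $\phi(v_j)=\phi(v_1')$ and $i\in\{0,2\}$ with $\phi(v_i)=\phi(v_0')$. Then $v_1'-v_i-v_j-v_0'$ is a path in $S(C_4)$: the edge $v_1'v_i$ exists because $v_i\in N(v_1')=\{v_0,v_2\}$, the edge $v_jv_0'$ because $v_j\in N(v_0')=\{v_1,v_3\}$, and $v_iv_j$ is an edge of the cycle since $i$ is even and $j$ odd (all four such pairs are cycle edges); the four vertices are clearly distinct. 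This $P_4$ is colored $\phi(v_1'),\phi(v_0'),\phi(v_1'),\phi(v_0')$, using only two colors, contradicting that $\phi$ is a star coloring. Hence no $4$-star-coloring of $S(C_4)$ exists, and with Theorem~\ref{furmanthm} we conclude $\chi_S(S(C_4))=5$.

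The argument is short, so there is no single heavy computation; the step that takes the most care is the first one -- spotting the false-twin structure of $S(C_4)$ and noticing that a monochromatic twin pair forces its entire (size-four) neighborhood to be rainbow, which already exhausts the palette. After that, the coloring of the cycle is so constrained that the offending $P_4$ is found by inspection. I would expect the $C_5$ and $C_7$ cases to run along similar lines -- pin down the coloring of a small dense subgraph up to the color-permutation equivalence introduced above, then produce a two-colored $P_4$ -- with $C_7$ requiring the extra bookkeeping the introduction mentions.
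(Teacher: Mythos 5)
Your proof is correct. The logic of both steps checks out: if $\phi(v_0)=\phi(v_2)=c$, then for any distinct $x,w$ in the common neighborhood $\{v_1,v_3,v_1',v_3'\}$ the walk $x\to v_0\to w\to v_2$ really is a path on four distinct vertices colored $\phi(x),c,\phi(w),c$, so the neighborhood must be rainbow in colors other than $c$, which needs five colors; and once all four cycle vertices are distinctly colored, the path $v_1'\to v_i\to v_j\to v_0'$ you exhibit is genuinely $2$-colored because $\{\phi(v_0),\phi(v_2)\}$ and $\{\phi(v_1),\phi(v_3)\}$ are disjoint. Your route differs from the paper's in organization rather than in spirit. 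The paper invokes Theorem \ref{cycle} to split into the cases of three or four colors on the copy of $C_4$, and in each case names an explicit $2$-colored $P_4$; you instead prove a small structural fact (a monochromatic false-twin pair forces its size-four common neighborhood to be rainbow, exhausting the palette) that kills the three-color case without citing Theorem \ref{cycle}, and then a single uniform argument replaces the paper's two subcases for the four-color situation. Your treatment of the three-color case is in fact slightly more complete: the paper's Case 2 asserts that $\phi(v_3')$ must equal $\phi(v_1)$ or $\phi(v_3)$, silently passing over the possibility that $v_3'$ receives the fourth color not used on the cycle, whereas your counting argument disposes of that possibility automatically. The trade-off is that the paper's version is shorter to read for someone who already has Theorem \ref{cycle} in hand, while yours is self-contained and isolates a reusable observation about twins that is close in flavor to Lemma \ref{babcb}.
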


\begin{proof}
Label the vertices of $C_5$ clockwise with $v_0,v_1,v_2,v_3$ and the vertex corresponding to $v_i$ under the splitting graph construction with $v_i'$.
Suppose for sake of contradiction that $\phi$ is a four star-coloring of $S(C_4)$. By Theorem \ref{cycle}, it suffices to consider the following two cases.

\textbf{Case 1}: Suppose that $\phi$ uses all four colors to color the copy of $C_4$ in $S(C_4)$. Since $\phi$ is a proper vertex coloring, either $\phi(v_0') = \phi(v_0)$ or $\phi(v_0')=\phi(v_2)$. If $\phi(v_0')=\phi(v_0)$, then it follows that $\phi(v_1')=\phi(v_3)$. Then, $v_1' \to v_0 \to v_3 \to v_0'$ is a 2-colored $P_4$, a contradiction. On the other hand, if $\phi(v_0')=\phi(v_2)$, then it follows from considering the path $v_0' \to v_1 \to v_2 \to v_3'$ that $\phi(v_3')=\phi(v_3)$. Now, $v_0' \to v_3 \to v_2 \to v_3'$ is a 2-colored $P_4$, a contradiction.

\textbf{Case 2}: Suppose $\phi$ uses three colors to color the copy of $C_4$ in $S(C_4)$. Without loss of generality, we can assume $\phi(v_0)=\phi(v_2)$ (the other cases are isomorphic as vertex-colored graphs). Since $\phi$ is a proper vertex coloring, either $\phi(v_3') = \phi(v_3)$ or $\phi(v_3')=\phi(v_1)$. In the former case, $v_3' \to v_2 \to v_3 \to v_0$ is a 2-colored $P_4$ and in the latter case, $v_3' \to v_0 \to v_1 \to v_2$ is a 2-colored $P_4$, a contradiction.
\end{proof}

\begin{thm}
The star chromatic number of the splitting graph of $C_5$ is 5.
\end{thm}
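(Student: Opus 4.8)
The plan is to follow the pattern of the $S(C_4)$ argument, except that here only a single case arises. By Theorem~\ref{furmanthm} we already know $\chi_S(S(C_5)) \le 5$, so it is enough to show that $S(C_5)$ admits no $4$-star-coloring. Suppose for contradiction that $\phi$ is such a coloring; label the cycle $v_0,v_1,v_2,v_3,v_4$ clockwise and let $v_i'$ be the splitting vertex, so that $v_i'$ is adjacent exactly to $v_{i-1}$ and $v_{i+1}$ (indices modulo $5$). The copy of $C_5$ is an induced subgraph of $S(C_5)$, so $\phi$ restricted to it is a star coloring of $C_5$; since $\chi_S(C_5)=4$ by Theorem~\ref{cycle}, this restriction uses all four colors. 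In particular, unlike in the $C_4$ case there is no sub-case in which the cycle is colored with only three colors.

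The next step is to normalize the coloring of the cycle up to isomorphism of vertex-colored graphs. With exactly four colors used on five vertices, exactly one color is repeated, and its two occurrences are nonadjacent, hence at distance $2$ in $C_5$. Rotating, we may assume these two vertices are $v_0$ and $v_2$; the reflection fixing $v_1$ (and swapping $v_0 \leftrightarrow v_2$, $v_3 \leftrightarrow v_4$) lets us pin down the order of the remaining two colors; and after renaming colors we may assume $\phi(v_0)=\phi(v_2)=1$, $\phi(v_1)=2$, $\phi(v_3)=3$, and $\phi(v_4)=4$.

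Now the contradiction is forced by the twin vertex $v_1'$, whose two neighbors $v_0$ and $v_2$ are both colored $1$. Since $\phi$ is proper, $\phi(v_1') \in \{2,3,4\}$, and I would eliminate each possibility with a bicolored $P_4$ (each path below uses only cycle edges together with the two edges incident to $v_1'$): if $\phi(v_1')=2$ then $v_0 \to v_1 \to v_2 \to v_1'$ is colored $1,2,1,2$; if $\phi(v_1')=3$ then $v_3 \to v_2 \to v_1' \to v_0$ is colored $3,1,3,1$; and if $\phi(v_1')=4$ then $v_4 \to v_0 \to v_1' \to v_2$ is colored $4,1,4,1$. In each case we obtain a $P_4$ on four distinct vertices using only two colors, contradicting that $\phi$ is a star coloring. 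Hence no color is available for $v_1'$, and therefore $\chi_S(S(C_5)) = 5$.

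I expect the only real obstacle to be making the normalization in the second paragraph airtight — that is, verifying that every proper $4$-coloring of $C_5$ that uses all four colors is isomorphic, as a vertex-colored graph, to the canonical one above. Once that is in hand the argument reduces to the three short path checks in the last paragraph, and it is noticeably cleaner than the $S(C_4)$ and (especially) $S(C_7)$ cases precisely because the cycle here cannot be $3$-colored.
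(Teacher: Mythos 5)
Your proof is correct. It follows the same overall strategy as the paper's -- use Theorem \ref{cycle} to force all four colors onto the copy of $C_5$, then exhibit a bicolored $P_4$ through the splitting vertices -- but the execution differs in a way worth noting. The paper splits into two cases according to whether the repeated color sits on $\{v_0,v_2\}$ or $\{v_0,v_3\}$ and, in each, chains forced equalities among the colors of several splitting vertices ($\phi(v_4)=\phi(v_4')$, then $\phi(v_0')=\phi(v_3)$, etc.) before locating the bad path. You instead observe that the two cases are related by a reflection of $C_5$ (which extends to an automorphism of $S(C_5)$), normalize to the single coloring $1,2,1,3,4$, and then localize the entire contradiction at the one splitting vertex $v_1'$ whose two neighbors both carry the repeated color: each of its three admissible colors is killed by an explicit bicolored $P_4$ through $v_1'$. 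Your normalization is airtight (the repeated color occupies a distance-$2$ pair, rotations act transitively on such pairs, and color renaming handles the rest), and all three paths you list are genuine paths on four distinct vertices. The payoff of your version is that it avoids the forced-color propagation and makes clear exactly where the obstruction lives; the paper's version avoids having to justify the symmetry reduction. Both are complete.
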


\begin{proof}
Label the vertices of $C_5$ clockwise with $v_0, \ldots, v_4$ and label the vertex corresponding to $v_i$ under the splitting graph construction with $v_i'$.

Suppose for the sake of contradiction that $\phi$ is a four star coloring of $S(C_5))$. By Theorem \ref{cycle}, $\phi$ uses all four colors to color $C_5$. Since $\phi$ is a proper star coloring, $\phi$ must use one of the four colors to color two distinct vertices and the other three colors to color the remaining three vertices. Without loss of generality, we can assume $\phi(v_0)$ is used twice. We now consider two cases depending on which other vertex of $C_5$ has the same color as $v_0$.

\textbf{Case 1}: Suppose $\phi(v_0) = \phi(v_2)$. This implies $\phi(v_4) = \phi(v_4')$ and therefore that $\phi(v_0')=\phi(v_3)$. Then, $v_0' \to v_4 \to v_3 \to v_4'$ is a 2-colored $P_4$, a contradiction.

\textbf{Case 2}: Suppose $\phi(v_0) = \phi(v_3)$. This implies $\phi(v_1)=\phi(v_1')$ and therefore that $\phi(v_0') = \phi(v_2)$. Then, $v_0' \to v_1 \to v_2 \to v_1'$ is a 2-colored $P_4$, a contradiction.

Thus, $\chi_S(S(C_5)) = 5$.
\end{proof}

To prove that the star chromatic number of $S(C_7)$ is 5, we first give two helpful lemmas.

\begin{lem}\label{bicolp3}
Suppose $C_7$ is three star colored. Then, some $P_3$ in $C_7$ is 2-colored.
\end{lem}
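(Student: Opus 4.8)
The plan is to argue by contradiction. Label the vertices of $C_7$ cyclically as $v_0, v_1, \ldots, v_6$, with all indices read modulo $7$, and suppose $\phi$ is a three star coloring in which \emph{no} $P_3$ is 2-colored. Since $\phi$ is in particular a proper coloring, for each $i$ the path $v_{i-1} \to v_i \to v_{i+1}$ satisfies $\phi(v_{i-1}) \neq \phi(v_i)$ and $\phi(v_i) \neq \phi(v_{i+1})$; the hypothesis that this $P_3$ is not 2-colored then forces $\phi(v_{i-1}) \neq \phi(v_{i+1})$ as well. Hence every three consecutive vertices of $C_7$ receive all three colors. (Note that beyond properness, the full strength of the star-coloring hypothesis is not actually needed here; it merely guarantees that a proper $3$-coloring exists, which is Theorem \ref{cycle}.)

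The key step is to promote this local ``rainbow window'' condition to a global statement. Fixing $i$, the color $\phi(v_{i+1})$ is the \emph{unique} color distinct from both $\phi(v_{i-1})$ and $\phi(v_i)$, since only three colors are available. Applying this observation at index $i+1$ and index $i+2$ in turn, $\phi(v_{i+2})$ is the color avoided by $\{\phi(v_i),\phi(v_{i+1})\}$ and $\phi(v_{i+3})$ is the color avoided by $\{\phi(v_{i+1}),\phi(v_{i+2})\}$; but that avoided color is precisely $\phi(v_i)$. Thus $\phi(v_{i+3}) = \phi(v_i)$ for every $i$, i.e.\ the coloring is periodic of period $3$ around the cycle.

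Finally I would extract the contradiction from $\gcd(3,7)=1$. Iterating the periodicity gives $\phi(v_0) = \phi(v_3) = \phi(v_6) = \phi(v_9) = \phi(v_2)$, where the last equality uses $v_9 = v_2$ because indices are taken modulo $7$. This contradicts the fact, established in the first paragraph, that the window $v_0 \to v_1 \to v_2$ is rainbow and in particular $\phi(v_0)\neq\phi(v_2)$. I do not expect any genuine obstacle in carrying this out; the only points requiring care are spelling out the ``unique third color'' deduction that drives the periodicity, and keeping the index arithmetic modulo $7$ straight.
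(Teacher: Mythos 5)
Your proof is correct and follows essentially the same route as the paper: both argue by contradiction that the absence of a 2-colored $P_3$ forces every window of three consecutive vertices to be rainbow, hence forces the coloring to repeat with period $3$, which is incompatible with a cycle of length $7$. Your write-up just makes explicit the ``completely determines the remaining colors'' step that the paper leaves to the reader, via the periodicity and $\gcd(3,7)=1$ observation.
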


\begin{proof}
Suppose for the sake of contradiction that $\phi: V(C_7) \to \{1,2,3\}$ is a three star coloring of $C_7$ with no bi-colored $P_3$. Label the vertices of $C_7$ clockwise with $v_0, \ldots , v_6$. Since $\phi$ is a star coloring, for some $i$ we know $\phi(v_i)=1$. Since $\phi$ has no bi-colored $P_3$, we know $\phi(v_{i-1}) \neq \phi(v_{i+1})$. Suppose without loss of generality (the other case is symmetric) that $\phi(v_{i-1})=2$ and $\phi(v_{i+1})=3$. The fact that $\phi$ has no 2-colored $P_3$ completely determines the colors of the remaining vertices and it follows that $v_{i-2} \to v_{i-3} \to v_{i+3}$ is a 2-colored $P_3$, a contradiction.
\end{proof}

\begin{lem}\label{babcb}
Label the vertices of $C_n$ clockwise with $v_0, \ldots, v_{n-1}$ and label the vertex corresponding to $v_i$ under the splitting graph construction with $v_i'$. Suppose $\phi$ is a $k$-star coloring of $S(C_n)$ and there exists $i$ such that $\phi(v_i)=\phi(v_{i+2})=\phi(v_{i+4})$. Then $k \geq 5$.
\end{lem}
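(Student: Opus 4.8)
The plan is to argue by contradiction. Suppose $\phi$ is a $4$-star-coloring of $S(C_n)$ and that $i$ satisfies $\phi(v_i)=\phi(v_{i+2})=\phi(v_{i+4})$; after permuting the names of the colors we may take this common value to be $1$. The goal is to exhibit a path on four vertices that uses only the two colors $1$ and $4$, contradicting the definition of a star coloring.

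First I would record what the hypothesis forces on the two ``gap'' vertices $v_{i+1}$ and $v_{i+3}$ of the cycle. Each is adjacent to a vertex colored $1$, so neither receives color $1$; and the path $v_i\to v_{i+1}\to v_{i+2}\to v_{i+3}$ has color sequence $1,\phi(v_{i+1}),1,\phi(v_{i+3})$, so the star condition forces $\phi(v_{i+1})\neq\phi(v_{i+3})$. Relabeling if necessary, I may assume $\phi(v_{i+1})=2$ and $\phi(v_{i+3})=3$.

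Next I would pin down the colors of the splitting vertices $v_{i+1}'$ and $v_{i+3}'$. The vertex $v_{i+1}'$ has neighborhood $N(v_{i+1})=\{v_i,v_{i+2}\}$, both colored $1$, so $\phi(v_{i+1}')\in\{2,3,4\}$. Considering the two four-vertex paths $v_{i+2}\to v_{i+1}\to v_i\to v_{i+1}'$, with colors $1,2,1,\phi(v_{i+1}')$, and $v_{i+4}\to v_{i+3}\to v_{i+2}\to v_{i+1}'$, with colors $1,3,1,\phi(v_{i+1}')$, the star condition rules out colors $2$ and $3$ for $v_{i+1}'$, forcing $\phi(v_{i+1}')=4$. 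A parallel computation for $v_{i+3}'$ — whose neighborhood $N(v_{i+3})=\{v_{i+2},v_{i+4}\}$ consists of color-$1$ vertices and which sits at the end of the analogous paths $v_i\to v_{i+1}\to v_{i+2}\to v_{i+3}'$ and $v_{i+2}\to v_{i+3}\to v_{i+4}\to v_{i+3}'$ — gives $\phi(v_{i+3}')=4$ as well.

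Finally, $v_i,v_{i+1}',v_{i+2},v_{i+3}'$ are four distinct vertices (distinctness is immediate, since $v_i\neq v_{i+2}$ and $v_{i+1}'\neq v_{i+3}'$) forming the path $v_i\to v_{i+1}'\to v_{i+2}\to v_{i+3}'$ in $S(C_n)$, whose color sequence is $1,4,1,4$: a $2$-colored $P_4$, contradicting that $\phi$ is a star coloring. Hence no $4$-star-coloring of $S(C_n)$ exists and $k\geq 5$. I do not expect a genuine obstacle here; the only things to be careful about are that each invoked four-vertex path really is a path on four distinct vertices for every $n$ to which the hypothesis can apply, and the one mildly clever point — the choice of the final path — which amounts to noticing that centering on the thrice-repeated color pushes the two ``parallel'' splitting vertices $v_{i+1}'$ and $v_{i+3}'$ into the same leftover color.
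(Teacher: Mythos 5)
Your proposal is correct and is essentially the paper's argument: both show that $v_{i+1}'$ and $v_{i+3}'$ must each avoid the three distinct colors of $v_i$, $v_{i+1}$, $v_{i+3}$, and then use the path $v_i\to v_{i+1}'\to v_{i+2}\to v_{i+3}'$ to force $\phi(v_{i+1}')\neq\phi(v_{i+3}')$, yielding a fifth color. You merely phrase it as a contradiction with a $4$-coloring and spell out the ``easy to check'' steps the paper leaves implicit.
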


\begin{proof}
Since $\phi$ is a star coloring, $\phi(v_{i+1}) \neq \phi(v_{i+3})$. It follows that $\phi$ assigns $v_{i+1}'$ a different color from the three distinct colors used to color $v_i$, $v_{i+1}$, and $v_{i+3}$ (if the color of $v_{i+1}$ matches any of the three colors used to color $v_i$, $v_{i+1}$, or $v_{i+3}$, then it is easy to check that $\phi$ is not a star coloring). Considering the path $v_i \to v_{i+1}' \to v_{i+2} \to v_{i+3}'$, we see that $\phi(v_{i+1}') \neq \phi(v_{i+3}')$. Thus, $k \geq 5$.
\end{proof}

\begin{figure}[t]
\begin{center}
\includegraphics[height=2cm]{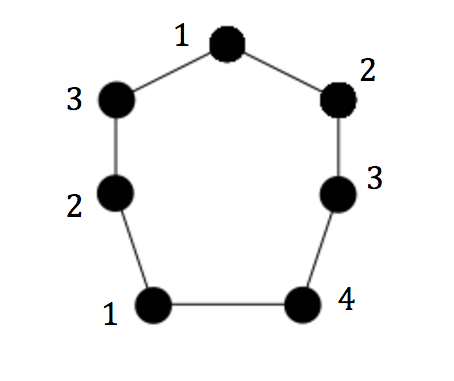}
\end{center}
\caption{The four star coloring of $C_7$ used in Case 2b of the proof of Theorem \ref{c7}} \label{Fig3}
\end{figure}

\begin{thm} \label{c7}
The star chromatic number of $S(C_7)$ is 5.
\end{thm}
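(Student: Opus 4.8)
The plan is to argue by contradiction: assume $\phi$ is a four star coloring of $S(C_7)$ and derive a contradiction by a case analysis on how $\phi$ behaves on the copy of $C_7$. By Theorem~\ref{cycle} we know $\chi_S(C_7)=3$, so the restriction of $\phi$ to the $7$-cycle uses either three or four colors; these are the two top-level cases, matching the "Case 1 / Case 2a / Case 2b" structure the excerpt hints at (the figure caption references "Case 2b").

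For the three-color case, I would invoke Lemma~\ref{bicolp3} to locate a $2$-colored $P_3$, say $v_{i-1}\to v_i\to v_{i+1}$ with $\phi(v_{i-1})=\phi(v_{i+1})$. Then $\phi(v_i')$ is forced to be a color different from all of $\phi(v_{i-1})=\phi(v_{i+1})$, $\phi(v_i)$, and (after a short check) also different from $\phi(v_{i-1})$; combined with properness this pins down a small set of options, and I would push the constraints around the cycle. The cleanest route is probably to show that a $2$-colored $P_3$ on the cycle, together with only four available colors and the splitting vertices $v_{i-1}',v_i',v_{i+1}'$ all needing distinct-ish colors, forces either a repeated color pattern $\phi(v_j)=\phi(v_{j+2})=\phi(v_{j+4})$ somewhere on the cycle—so Lemma~\ref{babcb} gives $k\geq 5$, contradiction—or directly a $2$-colored $P_4$ using one cycle vertex and two or three splitting vertices (paths of the form $v_a'\to v_b\to v_c'\to v_d$ are the typical culprits, as in the $C_4$ and $C_5$ proofs).

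For the four-color case, the restriction of $\phi$ to $C_7$ uses all four colors, so by pigeonhole the color multiset on the seven cycle vertices is either $(4,1,1,1)$, $(3,2,1,1)$, or $(2,2,2,1)$ as partitions. Up to rotation, reflection, and relabeling colors there are only a handful of genuinely distinct proper star colorings of $C_7$ with four colors—Figure~\ref{Fig3} presumably displays the critical representative—and for each I would check the forced colors on the splitting vertices. The key mechanism is again Lemma~\ref{babcb}: any four-coloring of $C_7$ that avoids a monochromatic arithmetic-progression-by-2 pattern $v_i,v_{i+2},v_{i+4}$ is quite restricted, and in the remaining configurations one locates an explicit $2$-colored $P_4$ through the splitting vertices. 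I expect the technical heart of the argument—and the reason $C_7$ needs "additional technical detail" over $C_4,C_5$—to be the bookkeeping in Case 2b: enumerating the valid four-star-colorings of $C_7$ up to isomorphism of vertex-colored graphs (hence the formal definition introduced just before these theorems) and verifying that each one fails to extend, rather than any single clever step. Once all cases yield a $2$-colored $P_4$ or invoke Lemma~\ref{babcb}, we conclude no four star coloring exists, so $\chi_S(S(C_7))\geq 5$; combined with Theorem~\ref{furmanthm}, which gives $\chi_S(S(C_7))\leq 5$, we get equality.
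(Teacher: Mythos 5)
Your proposal follows essentially the same route as the paper's proof: contradiction via a case split on whether $\phi$ uses three or four colors on the copy of $C_7$, with Lemma~\ref{bicolp3} locating a $2$-colored $P_3$ in the three-color case, Lemma~\ref{babcb} ruling out the pattern $\phi(v_i)=\phi(v_{i+2})=\phi(v_{i+4})$ throughout, and an enumeration of the four-star-colorings of $C_7$ up to vertex-colored isomorphism (Figure~\ref{Fig3}) in the final subcase, where the forced colors on the splitting vertices yield an explicit $2$-colored $P_4$. The only differences are organizational (you sort Case~2 by color multiset before reducing to the bi-colored-$P_3$ dichotomy) and that the detailed bookkeeping is left as a plan rather than carried out, but the plan is the paper's argument.
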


\begin{proof}
Suppose for the sake of contradiction that $\phi$ is a four star-coloring of $S(C_7)$. We then consider two cases depending on whether $\phi$ uses three or four colors to color $C_7$.

\textbf{Case 1}: Suppose $\phi$ uses three colors to color $C_7$. We know by Lemma \ref{bicolp3} that, colored by $\phi$, $C_7$ has some 2-colored $P_3$. That is, for some $i$, $\phi(v_{i-1})=\phi(v_{i+1})$. Since $\phi$ is a proper vertex coloring, $\phi(v_{i+2}) \neq v_{i-1})$ and similarly, $\phi(v_{i-2}) \neq \phi(v_{i-1})$. Since $\phi$ is a star coloring, we have that $\phi(v_{i_2}) \neq \phi(v_i)$ and $\phi(v_{i+2}) \neq \phi(v_i)$. This implies that either $\phi(v_{i-3})=\phi(v_{i+1})$ or $\phi(v_{i+3})=\phi(v_{i+1})$. In either case, Lemma \ref{babcb} gives a contradiction.

\textbf{Case 2}: Suppose $\phi$ uses four colors to color $C_7$. It is easy to check that $\phi$ cannot assign any one color to three distinct vertices of $C_7$. The proof now proceeds through two subcases.

\textbf{Case 2a}: Suppose $\phi$ has a bi-colored $P_3$. That is, there exists $i$ such that $\phi(v_{i-1})=\phi(v_{i+1})$. Since $\phi$ is a star-coloring, $\phi(v_{i-2}) \neq \phi(v_i)$ and $\phi(v_{i+2}) \neq \phi(v_i)$. It is easy to see that $\phi(v_{i-2}) = \phi (v_{i+2})$ (otherwise, $v_i'$ requires a fifth color to be properly star colored). Since $\phi$ is a star coloring that uses four colors to star color $C_7$, one of $\phi(v_{i-3})$ or $\phi(v_{i+3})$ is distinct from $\phi(v_i)$, $\phi(v_{i+1})$, and $\phi(v_{i+2})$. Because the two cases are symmetric, we may suppose $\phi(v_{i-3})$ is the distinct color. Lemma \ref{babcb} and the fact that $\phi$ is a proper vertex coloring imply that $\phi(v_{i+3})=\phi(v_i)$. Now, it follows that $\phi(v_i') = \phi(v_{i-3})$. This implies $\phi(v_{i+2}') = \phi(v_{i+2})$ and thus $\phi(v_{i+3}') = \phi(v_{i+1})$. Then, $v_{i+3}' \to v_{i+2} \to v_{i+1} \to v_{i+2}'$ is a 2-colored $P_3$, a contradiction.

\textbf{Case 2b}: Assume next that $\phi$ has no bi-colored $P_3$. One can check that any four star coloring of $C_7$ without any 2-colored $P_3$ is isomorphic as a vertex coloring to the coloring presented in Figure \ref{Fig3}.

First, we claim that for some $i$, $\phi(v_i) = \phi(v_i')$. Suppose for the sake of contradiction that for all $i$, $\phi(v_i) \neq \phi(v_i')$. Because $\phi$ has no 2-colored path with three vertices in $C_7$, this fully determines the color of each of the vertices added in the splitting construction. If we label the vertices of $C_7$ (clockwise by $v_0, \dots , v_6$) such that $\phi(v_i)=4$, then we can check that this leads to the 2-colored $P_4$ $v_{i-2}' \to v_{i-1} \to v_i \to v_{i+1}'$, a contradiction.

Thus, there exists an $i$, $0 \leq i \leq 6$, such that $\phi(v_i)=\phi(v_i')$. Since $C_7$ has no 2-colored path with three vertices, the condition $\phi(v_i)=\phi(v_i')$ for any $i$ fully determines $\phi$. It is easy to check that for a fixed $i$ with $0 \leq i \leq 6$, the coloring $\phi$ determined by setting $\phi(v_i)=\phi(v_i')$ contains a 2-colored $P_3$.
\end{proof}

\section{Conclusion}
Here we have computed the star chromatic number of splitting graphs of cycles. It would be interesting to consider star colorings of splitting graphs of other families--including complete multipartite graphs and direct products (alternatively called tensor products or Kronecker products) of cycles and paths. The shadow graph is another common construction in graph labeling (see \cite{gallian}). One could also bound the star chromatic number of the shadow graphs of various basic families.  

\section{Acknowledgments}
This research was conducted at the 2017 REU at the University of Minnesota Duluth, supported by  NSF/DMS-1659047. We would like to thank Joe Gallian for his incredible support at the REU as well as for reading through this paper.


\begin{thebibliography}{1}

\bibitem {albertson} M. O. Albertson, G. G. Chappell, H. A. Kierstead, A. K{\"u}ndgen, and R. Ramamurthi, Coloring with no 2-colored $P_4$'s, \emph{Electron. J. Combin.} 11 (2004), \#R26.

\bibitem {bu} Y. Bu, D. W. Cranston, M. Montassier, A. Raspaud, and W. Wang, Star coloring of sparse graphs, \emph{J. Graph Theory} 62(3)(2009), 201-219.

\bibitem{coleman} T. F. Coleman and J. J. Mor{\'e}, Estimation of spare Hessian matrices and graph coloring problems, \emph{Math. Program.} 28(1984), 243-270.

\bibitem {fertin} G. Fertin, A. Raspaud, and B. Reed, On star coloring of graphs, \emph{J. Graph Theory} 47(3)(2004), 163-182.

\bibitem {furman} H. Furma{\'n}czyk, Kowsalya V., and Vernold Vivin J., On Star Coloring of Splitting Graphs, arXiv:1705.09357 [math.CO] May 2017.

\bibitem {gallian} J. Gallian, A dynamic survey of graph labeling, \emph{Electron. J. Combin.} (2016) \# DS6.

\bibitem {colorsurvey} A. H. Gebremedhin, F. Manne, and A. Pothen, What color is your Jacobian? Graph coloring for computing derivatives, \emph{SIAM Review} 47(4) (2005), 629-705.

\bibitem {gebre} A. H. Gebremedhin, A. Tarafdar, F. Manne, and A. Pothen, New acyclic and star coloring algorithms with application to computing Hessians, \emph{SIAM J. Sci. Comput.} 29(3)(2007), 1042-1072.

\bibitem {grunbaum} B. Gr{\"u}nbaum, Acyclic colorings of planar graphs, \emph{Israel J. Math.} 14(4)(1973), 390-408.

\bibitem {kierstead} H. A. Kierstead, A. K{\"u}ndgen, and C. Timmons, Star coloring bipartite planar graphs, \emph{J. Graph Theory} 60(1)(2009), 1-10.

\bibitem{mohan} B. Mohar and S. {\v S}pacapan, Degenerate and star colorings of graphs on surfaces, \emph{European J. Combin.} 33(3)(2012), 340-349.

\bibitem{timmons} C. Timmons, Star coloring high girth planar graphs, \emph{Electron. J. Combin.} (2008) \# R124.

\end{thebibliography}
\end{document}